\newcommand{\lyxaddress}[1]{
\par {\raggedright #1
\vspace{1.4em}
\noindent\par}
}
\newtheorem{theorem}{Theorem}
\newtheorem{proposition}[theorem]{Proposition}
\newtheorem{lemma}[theorem]{Lemma}
\newtheorem{corollary}[theorem]{Corollary}
\theoremstyle{remark}
\newtheorem{remark}[theorem]{Remark}
\newtheorem*{remark*}{Remark}
\newcommand{\spec}{\mathop\mathrm{spec}\nolimits}
\newcommand{\diag}{\mathop\mathrm{diag}\nolimits}
\renewcommand{\Re}{\mathop\mathrm{Re}\nolimits}
\begin{document}

\title{A family of explicitly diagonalizable weighted Hankel matrices generalizing
the Hilbert matrix}

\author{T.~Kalvoda$^{1}$, P.~\v{S}\v{t}ov\'\i\v{c}ek$^{2}$}

\maketitle

\lyxaddress{$^{1}$Department of Applied Mathematics, Faculty of Information
Technology, Czech Technical University in~Prague, Th\'akurova~9,
160~00 Praha, Czech Republic}

\lyxaddress{$^{2}$Department of Mathematics, Faculty of Nuclear Science, Czech
Technical University in~Prague, Trojanova 13, 120~00 Praha, Czech
Republic}
\begin{abstract}
\noindent A three-parameter family $B=B(a,b,c)$ of weighted Hankel
matrices is introduced with the entries
\[
B_{j,k}=\frac{\Gamma(j+k+a)}{\Gamma(j+k+b+c)}\,\sqrt{\frac{\Gamma(j+b)\Gamma(j+c)\Gamma(k+b)\Gamma(k+c)}{\Gamma(j+a)\, j!\,\Gamma(k+a)\, k!}}\,,
\]
$j,k\in\mathbb{Z}_{+}$, supposing $a$, $b$, $c$ are positive and
$a<b+c$, $b<a+c$, $c\leq a+b$. The famous Hilbert matrix is included
as a particular case. The direct sum $B(a,b,c)\oplus B(a+1,b+1,c)$
is shown to commute with a discrete analog of the dilatation operator.
It follows that there exists a three-parameter family of real symmetric
Jacobi matrices, $T(a,b,c)$, commuting with \textbf{$B(a,b,c)$}.
The orthogonal polynomials associated with $T(a,b,c)$ turn out to
be the continuous dual Hahn polynomials. Consequently, a unitary mapping
$U$ diagonalizing $T(a,b,c)$ can be constructed explicitly. At the
same time, $U$ diagonalizes $B(a,b,c)$ and the spectrum of this
matrix operator is shown to be purely absolutely continuous and filling
the interval $[0,M(a,b,c)]$ where $M(a,b,c)$ is known explicitly.
If the assumption $c\leq a+b$ is relaxed while the remaining inequalities
on $a$, $b$, $c$ are all supposed to be valid, the spectrum contains
also a finite discrete part lying above the threshold $M(a,b,c)$.
Again, all eigenvalues and eigenvectors are described explicitly.
\end{abstract}
\vskip\baselineskip\noindent\emph{ Keywords}: weighted Hankel matrix;
diagonalization; spectrum; Hilbert's matrix

\vskip\baselineskip\noindent\emph{ AMS Subject Classification}:
47B35; 47B37; 47A10; 33C45

\section{Introduction}

An integral operator $K$ on $L^{2}((0,\infty),\mbox{d}x)$ whose
integral kernel $\mathcal{K}(x,y)$ is real, symmetric and homogeneous
of degree $-1$ on the first quadrant and such that
\[
\int_{0}^{\infty}\left|\mathcal{K}(t,1)\right|t^{-1/2}\mbox{d}t<\infty
\]
is bounded and explicitly diagonalizable by the Mellin integral transform.
In more detail, $K$ is unitarily equivalent to the multiplication
operator by the function
\[
g(\xi)=\int_{0}^{\infty}\mathcal{K}(t,1)\, t^{-1/2-i\xi}\mbox{d}t=\int_{\mathbb{R}}e^{-i\xi x}\mathcal{K}(e^{x/2},e^{-x/2})\,\mbox{d}x
\]
acting on $L^{2}(\mathbb{R},\mbox{d}\xi)$. This feature can readily
be understood if the symmetry properties of such an integral operator
are examined. $K$ commutes with the one-parameter unitary group of
dilatation transformations on the positive half-line generated by
the skew-symmetric differential operator $D=x\mbox{d}/\mbox{d}x+1/2$.
The kernel of the Mellin integral transform is in fact nothing but
a family of generalized eigenfunctions of $D$.

All what has been said above is applicable to the integral kernel
\begin{equation}
\mathcal{K}_{\ell}(x,y)=\frac{(xy)^{\ell/2}}{(x+y)^{\ell+1}}\label{eq:K_ell}
\end{equation}
depending on a real parameter $\ell$ provided $\ell>-1$. The corresponding
integral operator $K_{\ell}$ is unitarily equivalent to the multiplication
operator by the function
\begin{equation}
g(\xi)=\int_{0}^{\infty}t^{(\ell-1)/2+i\xi}(t+1)^{-\ell-1}\mbox{d}t=\frac{1}{\Gamma(\ell+1)}\left|\Gamma\!\left(\frac{1}{2}(\ell+1)+i\xi\right)\right|^{2}.\label{eq:fce_g}
\end{equation}

In no way it is straightforward to find an authentic discrete analog
of the integral kernel (\ref{eq:K_ell}). Of course, given a homogenous
kernel of degree $-1$ one can always restrict the kernel to the discrete
set $(\theta+\mathbb{Z}_{+})\times(\theta+\mathbb{Z}_{+})$, for some
$\theta>0$ and with $\mathbb{Z}_{+}$ standing for nonnegative integers,
obtaining this way a semi-infinite matrix. For example, using the
kernel (\ref{eq:K_ell}), with $\ell=0$, we get the (generalized)
Hilbert matrix. Matrices of this type have been explored, for instance,
in \cite{Kato58}. But as emphasized in the introduction of the cited
paper, it appears that there may be inconveniences in applying to
matrices some methods originally invented for integral operators.
In particular, let us note that no obvious discrete analog of the
Mellin integral transform yielding a diagonalization of such matrix
operators is at our disposal. Nevertheless, comparison of a matrix
operator obtained by discretizing an integral kernel with the integral
operator in question can provide, under certain additional assumptions,
quite a useful information including the precise value of the norm,
see Chp.~1 in \cite{Wilf}.

The present paper actually aims to describe matrix operators whose
character and properties remarkably closely resemble those of the
integral operators $K_{\ell}$. Notably, despite of nontrivial form
of such matrices all spectral properties are known in full detail.
Our approach is based on a construction of a three-parameter family
of matrices $B=B(a,b,c)$ with the desired structure of weighted Hankel
matrices and enjoying an appropriate symmetry. By the symmetry we
mean a discrete analog $D$ of the generator of dilatation transformations
which is commuting with $B$. $D$ can be regarded as a first order
difference operator. Then $D^{2}$ commutes with $B$ as well and,
since this is a second order difference operator, as such can be related
to a Jacobi (tridiagonal) matrix. Owing to the discrete symmetry $D$,
an explicit diagonalization of $B$ is possible. To this end, one
has to rely on the theory of orthogonal polynomials in place of the
Mellin transform. This is only roughly the main idea which is made
precise and fully developed in the remainder of the paper.

The famous generalized Hilbert matrix is included in the family $B(a,b,c)$
as a one-parameter subfamily. The constructed diagonalization procedure
if applied to Hilbert's matrix reproduces, in an alternative way,
a complete solution to the spectral problem due to Rosenblum \cite{RosenblumII}.
Considering only the original Hilbert matrix, i.e. not treating the
whole one-parameter family, it has been observed earlier by Otte that
a commuting Jacobi matrix can be used for a diagonalization. This
result exists in the form of a conference presentation which is currently
available from author's web page \cite{Otte}. But the first one who
has discovered that the generalized Hilbert matrix has a tridiagonal
matrix in its commutant seems to be Grünbaum in \cite{Grunbaum}.
As noted in the cited paper, the class of Hankel matrices with this
property is very limited. Saying this, the author was guided by his
forgoing studies of a similar problem for the class of Toeplitz matrices
\cite{Grunbaum81}.

\section{A family of weighted Hankel matrices with a discrete symmetry}

Consider the three-parameter family of semi-infinite symmetric matrices
$B=B(a,b,c)$,
\begin{equation}
B_{j,k}=\frac{\Gamma(j+k+a)}{\Gamma(j+k+b+c)}\sqrt{\frac{\Gamma(j+b)\Gamma(j+c)\Gamma(k+b)\Gamma(k+c)}{\Gamma(j+a)\, j!\,\Gamma(k+a)\, k!}},\ j,k\in\mathbb{Z}_{+}.\label{eq:Babc}
\end{equation}
The parameters $a$, $b$ and $c$ should be restricted to a range
for which the matrices $B(a,b,c)$ are real and hence Hermitian. Throughout
the paper we shall assume that the parameters $a$, $b$, $c$ are
all positive. The entries of $B$ have the structure $B_{j,k}=w(j)w(k)h(j+k)$
and therefore the matrices may be classified as weighted Hankel matrices,
see Chp. 6 $\S$8 in \cite{Peller}.

Note that, by the Stirling formula, the leading asymptotic term of
the matrix entry $B_{j,k}$ for both $j$ and $k$ large is
\[
B_{j,k}\sim\frac{(jk)^{(b+c-a-1)/2}}{(j+k)^{b+c-a}}.
\]
This may be compared with the integral kernel (\ref{eq:K_ell}), with
$\ell=b+c-a-1$, suggesting that we should suppose 
\begin{equation}
b+c-a>0.\label{eq:ineq}
\end{equation}

The matrix entries $B_{j,k}$ take particularly simple form if we
put $a=\theta$, $b=\theta+\ell$, $c=1$, with $\ell\in\mathbb{Z}_{+}$.
Then
\[
B_{j,k}=\frac{\sqrt{(j+\theta)_{\ell}(k+\theta)_{\ell}}}{(j+k+\theta)_{\ell+1}}
\]
where $(a)_{\ell}$ is the usual Pochhammer symbol. In particular,
for $\ell=0$ we obtain the generalized Hilbert matrix, $B_{j,k}=1/(j+k+\theta)$;
for a short account on its history see Chp. IX in \cite{HardyLittlewoodPolya}.

Moreover, letting $\ell=1$ we get a modification of the so called
Bergman-Hilbert matrix,
\[
B_{j,k}=\frac{\sqrt{(j+\theta)(k+\theta)}}{(j+k+\theta)(j+k+\theta+1)},
\]
studied in \cite{Ghatage,DavisGhatage}.

Put $A=B(a,b,c)\oplus B(a+1,b+1,c)$. $A$ can be identified with
the semi-infinite matrix with the entries
\[
A_{2j,2k}=B(a,b,c)_{j,k},\ A_{2j+1,2k+1}=B(a+1,b+1,c)_{j,k},\ j,k=0,1,2,\ldots,
\]
and $A_{j,k}=0$ if $j$, $k$ are of different parity. This identification
corresponds to the direct sum
\begin{equation}
\ell^{2}(\mathbb{Z}_{+})=\ell^{2}(2\mathbb{Z}_{+})\oplus\ell^{2}(2\mathbb{Z}_{+}+1).\label{eq:direct_sum}
\end{equation}
($\ell^{2}(2\mathbb{Z}_{+})$ is spanned by $\{e_{2k};\, k\in\mathbb{Z}_{+}\}$,
$\ell^{2}(2\mathbb{Z}_{+}+1)$ is spanned by $\{e_{2k+1};\, k\in\mathbb{Z}_{+}\}$
where $\{e_{k};\, k\in\mathbb{Z}_{+}\}$ is the standard basis in
$\ell^{2}(\mathbb{Z}_{+})$). Furthermore, let us introduce another
three-parameter family of matrices, $D=D(a,b,c)$, such that
\[
D_{j,j+1}=-D_{j+1,j}=d(j),\ D_{j,k}=0\ \text{otherwise},
\]
where
\[
d(2j)=\sqrt{(j+a)(j+b)},\ d(2j+1)=\sqrt{(j+1)(j+c)},\ j,k=0,1,2,\ldots.
\]
$D$ can be regarded as a discrete analog of the dilatation operator.

With respect to the decomposition (\ref{eq:direct_sum}), the matrices
$A$ and $D$ can be written in the blockwise form
\begin{equation}
A=\left(\begin{array}{cc}
B(a,b,c) & 0\\
0 & B(a+1,b+1,c)
\end{array}\right)\!,\ D=\left(\begin{array}{cc}
0 & C\\
-C^{T} & 0
\end{array}\right)\!,\label{eq:matAD}
\end{equation}
where $C$ is another semi-infinite matrix with the entries
\[
C_{j,j}=d(2j),\ C_{j+1,j}=-d(2j+1),\ C_{j,k}=0\ \text{otherwise}.
\]

\begin{lemma} $A$ and $D$ commute. \end{lemma}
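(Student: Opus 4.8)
The plan is to strip away the structural layers until a single scalar identity between quotients of Gamma functions remains, and then to settle that identity using only the functional equation $\Gamma(x+1)=x\Gamma(x)$.

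First I would observe that, $D$ being tridiagonal, every entry of $AD$ and of $DA$ is a finite sum, so $[A,D]$ is a well-defined formal matrix and it suffices to check that all of its entries vanish. Multiplying out the blockwise forms in \eqref{eq:matAD} gives
\[
AD-DA=\begin{pmatrix}0 & B(a,b,c)\,C-C\,B(a+1,b+1,c)\\ C^{T}B(a,b,c)-B(a+1,b+1,c)\,C^{T} & 0\end{pmatrix},
\]
and since $B(a,b,c)$ and $B(a+1,b+1,c)$ are real and symmetric, the lower-left block is exactly the transpose of the upper-right one. Hence everything reduces to the single intertwining relation $B(a,b,c)\,C=C\,B(a+1,b+1,c)$. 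Writing $B=B(a,b,c)$, $B'=B(a+1,b+1,c)$ and using that $C$ is bidiagonal with $C_{j,j}=d(2j)$ and $C_{j+1,j}=-d(2j+1)$, this amounts to verifying, for all $j,k\in\mathbb{Z}_{+}$,
\[
d(2k)\,B_{j,k}-d(2k+1)\,B_{j,k+1}=d(2j)\,B'_{j,k}-d(2j-1)\,B'_{j-1,k},
\]
where for $j=0$ the last term is absent; consistently, $d(2j-1)=\sqrt{j(j+c-1)}$ vanishes at $j=0$.

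The second step is to express each of the four matrix entries occurring here as an explicit multiple of the single entry $B_{j,k}$. Using the factorization $B_{j,k}=w(j)\,w(k)\,h(j+k)$ recorded above, with $h(n)=\Gamma(n+a)/\Gamma(n+b+c)$ and $w(j)=\sqrt{\Gamma(j+b)\Gamma(j+c)/(\Gamma(j+a)\,j!)}$, and the analogous data $h'$, $w'$ for $B'$, the functional equation of the Gamma function immediately yields $w'(j)=\sqrt{(j+b)/(j+a)}\,w(j)$, $h'(n)=\frac{n+a}{n+b+c}\,h(n)$ (so that $h'(n-1)=h(n)$), together with the one-step ratios $w(j+1)/w(j)$ and $h(n+1)/h(n)$. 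Feeding these in, one obtains closed forms such as
\[
B_{j,k+1}=\sqrt{\frac{(k+b)(k+c)}{(k+a)(k+1)}}\;\frac{j+k+a}{j+k+b+c}\,B_{j,k},\qquad B'_{j,k}=\sqrt{\frac{(j+b)(k+b)}{(j+a)(k+a)}}\;\frac{j+k+a}{j+k+b+c}\,B_{j,k},
\]
and $B'_{j-1,k}=\sqrt{j(k+b)/((j+c-1)(k+a))}\;B_{j,k}$. Substituting these into the identity of the previous step, together with $d(2k)=\sqrt{(k+a)(k+b)}$, $d(2k+1)=\sqrt{(k+1)(k+c)}$, $d(2j)=\sqrt{(j+a)(j+b)}$, $d(2j-1)=\sqrt{j(j+c-1)}$, every square root cancels and both sides collapse to
\[
\sqrt{\frac{k+b}{k+a}}\;\frac{b(k+a)+j(a-c)}{j+k+b+c}\;B_{j,k}.
\]
Here the left-hand side produces the polynomial numerator $(k+a)(j+k+b+c)-(k+c)(j+k+a)$ and the right-hand side the numerator $(j+b)(j+k+a)-j(j+k+b+c)$; a short expansion shows that both equal $b(k+a)+j(a-c)$. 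This one line is the only place where the three parameters genuinely interact.

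I do not expect any conceptual obstacle: the entire content of the lemma is that concluding scalar identity, and the real work consists in the disciplined bookkeeping of the many Gamma quotients and radicals, together with the routine check of the boundary case $j=0$, where the term carrying $B'_{-1,k}$ simply drops out. One might alternatively insert the Beta-integral representation $h(n)=\Gamma(b+c-a)^{-1}\int_{0}^{1}t^{\,n+a-1}(1-t)^{b+c-a-1}\,\mathrm{d}t$, legitimate under \eqref{eq:ineq}; but since the weights $w(j)$ do not involve this integral, it does not shorten the computation, so the direct route above is the one I would follow.
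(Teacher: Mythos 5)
Your proof is correct and follows essentially the same route as the paper: both reduce the commutator, via the block structure of \eqref{eq:matAD}, to the intertwining relation $B(a,b,c)\,C=C\,B(a+1,b+1,c)$ and then verify it entrywise, and your scalar identity $b(k+a)+j(a-c)$ checks out on both sides, including the $j=0$ boundary case. The only (cosmetic) difference is that the paper strips the square roots first by conjugating with the diagonal weight matrix $W$, reducing everything to the one-line identity $h(z+1)=\frac{z+a}{z+b+c}h(z)$, whereas you carry the radicals through by hand.
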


\begin{proof} The proof can be carried out relying just on a straightforward
evaluation of the matrix entries of the commutator. One has (with
$d(-1):=0$)
\[
(AD-DA)_{j,k}=d(k-1)A_{j,k-1}-d(k)A_{j,k+1}+d(j-1)A_{j-1,k}-d(j)A_{j+1,k}.
\]
Obviously, $(AD-DA)_{j,k}$ vanishes whenever $j$ and $k$ are of
the same parity. Without loss of generality it suffices to consider
the situation when $j$ is even and $k$ is odd. Let us suppose $j,k\in\mathbb{Z}_{+}$
and evaluate the expression
\begin{eqnarray*}
(AD-DA)_{2j,2k+1} & = & d(2k)B(a,b,c)_{j,k}-d(2k+1)B(a,b,c)_{j,k+1}\\
 &  & +\, d(2j-1)B(a+1,b+1,c)_{j-1,k}-d(2j)B(a+1,b+1,c)_{j,k}.
\end{eqnarray*}
A direct computation actually shows that the expression equals zero.

One may prefer, however, to proceed another way revealing the algebraic
structure behind the identity. In view of (\ref{eq:matAD}), $AD-DA=0$
is equivalent to
\begin{equation}
B(a,b,c)\, C=CB(a+1,b+1,c),\label{eq:intertwinBC}
\end{equation}
i.e. $C$ intertwines the operators $B(a,b,c)$ and $B(a+1,b+1,c)$.
In order to take into account the structure of $B=B(a,b,c)$ let us
write $B=WHW$ where $W$ is the diagonal matrix with the diagonal
entries
\[
W_{j,j}=\sqrt{\frac{\Gamma(j+b)\Gamma(j+c)}{\Gamma(j+a)\, j!}},
\]
and $H$ is the Hankel matrix, $H_{j,k}=h(j+k)$, where
\[
h(z)=\frac{\Gamma(z+a)}{\Gamma(z+b+c)}.
\]
Similarly, $B(a+1,b+1,c)=\tilde{W}\tilde{H}\tilde{W}$, with analogous
expressions obtained just by shifting the parameters $a$ and $b$
by $1$. Note that $\tilde{h}(z)=h(z+1)$. Then (\ref{eq:intertwinBC})
can be rewritten as
\[
HV=\tilde{V}\tilde{H},\ \text{with}\ V=WC\tilde{W}^{-1},\ \tilde{V}=W^{-1}C\tilde{W}.
\]
One readily finds that
\[
V_{j,j}=j+a,\ V_{j+1,j}=-j-c,\ \tilde{V}_{j,j}=j+b,\ \tilde{V}_{j+1,j}=-j-1,\ V_{j,k}=\tilde{V}_{j,k}=0\,\ \text{otherwise}.
\]
Consequently, (\ref{eq:intertwinBC}) means that
\[
h(j+k)\,(j+k+a)=\tilde{h}(j+k)\,(j+k+b+c)
\]
 for all indices $j$, $k$. But this is actually so since
\[
\tilde{h}(z)=h(z+1)=\frac{z+a}{z+b+c}\, h(z),
\]
thus concluding the proof. \end{proof}

\begin{corollary} Let $T=T(a,b,c)$ be the three-parameter family
consisting of symmetric Jacobi matrices with the entries
\[
T_{j,j}=j\,(j+c-1)+(j+a)(j+b),\ T_{j,j+1}=T_{j+1,j}=-\sqrt{(j+1)(j+a)(j+b)(j+c)},
\]
$T_{j,k}=0$ otherwise. Then the matrices $B(a,b,c)$ and $T(a,b,c)$
commute. \end{corollary}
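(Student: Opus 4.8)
The plan is to deduce the Corollary from the Lemma by passing to the square of the discrete dilatation operator. Since $A$ commutes with $D$, it also commutes with $D^{2}$. From the block form $(\ref{eq:matAD})$ one reads off immediately that
\[
D^{2}=\begin{pmatrix}-CC^{T} & 0\\ 0 & -C^{T}C\end{pmatrix},
\]
so $D^{2}$ is again block-diagonal with respect to the parity decomposition $(\ref{eq:direct_sum})$. Since $A$ is block-diagonal as well, the vanishing of the commutator $AD^{2}-D^{2}A$ is equivalent to the two separate identities: $B(a,b,c)$ commutes with $CC^{T}$, and $B(a+1,b+1,c)$ commutes with $C^{T}C$. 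Only the first is needed, and it proves the Corollary as soon as one checks that $CC^{T}=T(a,b,c)$.

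That check is the only computational point. Since $C$ is lower bidiagonal, $CC^{T}$ is tridiagonal, and $(CC^{T})_{j,k}=\sum_{m}C_{j,m}C_{k,m}$ gives (with the convention $d(-1)=0$)
\[
(CC^{T})_{j,j}=d(2j)^{2}+d(2j-1)^{2},\qquad(CC^{T})_{j,j+1}=-\,d(2j)\,d(2j+1).
\]
Substituting $d(2j)^{2}=(j+a)(j+b)$, $d(2j-1)^{2}=j(j+c-1)$ and $d(2j)\,d(2j+1)=\sqrt{(j+1)(j+a)(j+b)(j+c)}$ reproduces exactly the diagonal and sub/super-diagonal entries of $T(a,b,c)$ stated above; in particular the case $j=0$ is covered by the convention $d(-1)=0$.

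An even shorter route avoids $D^{2}$ entirely and uses the intertwining relation $(\ref{eq:intertwinBC})$, namely $B(a,b,c)\,C=C\,B(a+1,b+1,c)$, which was established inside the proof of the Lemma. Transposing it and using that $B(a,b,c)$ and $B(a+1,b+1,c)$ are real symmetric gives $C^{T}B(a,b,c)=B(a+1,b+1,c)\,C^{T}$; multiplying the two identities then yields $B(a,b,c)\,CC^{T}=C\,B(a+1,b+1,c)\,C^{T}=CC^{T}\,B(a,b,c)$. In either approach there is no genuine obstacle; the main bookkeeping task is the verification $CC^{T}=T(a,b,c)$, where one has to handle the parity-dependent weights $d(2j)$ and $d(2j+1)$ correctly.
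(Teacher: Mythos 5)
Your proof is correct and follows essentially the same route as the paper: pass from $D$ to $D^{2}$, observe that the even-even block of $D^{2}$ is $-CC^{T}$, and verify by direct computation that $CC^{T}=T(a,b,c)$, with the convention $d(-1)=0$ handling $j=0$. The alternative argument via the intertwining relation $B(a,b,c)\,C=C\,B(a+1,b+1,c)$ is a nice explicit restatement, but it is the same algebra in different clothing.
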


\begin{proof} Clearly, $A$ and $D^{2}$ commute and therefore, in
view of (\ref{eq:matAD}), $B$ and $T=CC^{T}$ commute. $T$ is a
Jacobi matrix with the entries
\begin{eqnarray*}
T_{j,j} & = & d(2j-1)^{2}+d(2j)^{2}\,=\, j\,(j+c-1)+(j+a)(j+b),\\
T_{j,j+1} & = & T_{j+1,j}\,=\,-d(2j)d(2j+1)\,=\,-\sqrt{(j+1)(j+a)(j+b)(j+c)},
\end{eqnarray*}
and $T_{j,k}=0$ otherwise. \end{proof}

\section{\label{sec:diagonalization} Diagonalization and the spectral properties
of $B(a,b,c)$}

\subsection{The associated orthogonal polynomials}

The monic orthogonal polynomials associated with a Jacobi matrix $T$
are defined by the recurrence: $P_{-1}(x)=0$, $P_{0}(x)=1$, and
\[
P_{j+1}(x)=\left(x-T_{j+1,j+1}\right)P_{j}(x)-\left(T_{j,j+1}\right)^{2}P_{j-1}(x).
\]
In our case, it is convenient to modify the matrix $T$ by adding
a multiple of the unit operator. The redefined diagonal entries read
\begin{eqnarray*}
T_{j,j} & = & j(j+c-1)+(j+a)(j+b)-\frac{1}{4}(a+b-c)^{2}\\
 & = & -\, j\,(j+1)+(j-b+d)(j-c+d)+(j-a+d)(j-b+d)\\
 &  & +\,(j-a+d)(j-c+d)
\end{eqnarray*}
where $d=(a+b+c)/2$. The associated monic orthogonal polynomials,
$P_{n}(x)$, coincide with the continuous dual Hahn polynomials; see
Eq. 9.3.5 in \cite{KoekoekLeskySwarttouw}. We have (see also \cite[Eq. 9.3.1]{KoekoekLeskySwarttouw})
\begin{eqnarray*}
P_{n}(x^{2}) & = & (-1)^{n}\, S_{n}\!\left(x^{2};\frac{b+c-a}{2},\frac{a+c-b}{2},\frac{a+b-c}{2}\right)\\
 & = & (-1)^{n}(b)_{n}(c)_{n}\,\,_{3}F_{2}\!\left(-n,\frac{b+c-a}{2}+ix,\frac{b+c-a}{2}-ix;b,c;1\right)\!.
\end{eqnarray*}

The measure of orthogonality for the continuous dual Hahn polynomials
is known explicitly \cite[Eqs. 9.3.2, 9.3.3]{KoekoekLeskySwarttouw}.
Put
\[
\rho(x)=\frac{x\sinh(2\pi x)}{\pi^{2}\Gamma(a)\Gamma(b)\Gamma(c)}\left|\Gamma\!\left(\frac{b+c-a}{2}+ix\right)\!\Gamma\!\left(\frac{a+c-b}{2}+ix\right)\!\Gamma\!\left(\frac{a+b-c}{2}+ix\right)\right|^{2}\!.
\]
Then, for $a$, $b$, $c$ positive such that $b+c>a$, $a+c>b$ and
$a+b\geq c$, we have the orthogonality relation
\[
\int_{0}^{\infty}P_{m}(x^{2})P_{n}(x^{2})\rho(x)\,\mbox{d}x=(a)_{n}(b)_{n}(c)_{n}\, n!\,\delta_{m,n}.
\]

More generally, suppose for definiteness that $0<b\leq c$ and, in
agreement with (\ref{eq:ineq}), $0<a<b+c$. Then obviously $a+c-b>0$
but it may happen that $a+b-c<0$ (then necessarily $b<c$). In that
case the orthogonality relation should be modified by adding a discrete
part. So suppose
\begin{equation}
0<b<c\ \,\text{and}\,\ 0<a<c-b\label{eq:ineq_dicrete}
\end{equation}
(then $a<c-b<b+c$). Put ($\lceil x\rceil$ meaning the ceiling of
$x\in\mathbb{R}$)
\begin{equation}
N(a,b,c)=\lceil(c-a-b)/2\rceil-1.\label{eq:N}
\end{equation}
One has
\begin{eqnarray}
 &  & \int_{0}^{\infty}P_{m}(x^{2})P_{n}(x^{2})\rho(x)\,\mbox{d}x+\frac{\Gamma(c-a)\Gamma(c-b)}{\Gamma(c)\Gamma(c-a-b)}\nonumber \\
 &  & \quad\times\sum_{k=0}^{N(a,b,c)}\left(1+\frac{2k}{a+b-c}\right)\frac{(-1)^{k}(a+b-c)_{k}(a)_{k}(b)_{k}}{(a-c+1)_{k}(b-c+1)_{k}\, k!}\label{eq:OGrel}\\
 &  & \qquad\qquad\times P_{m}\!\Bigg(\!-\!\left(\frac{a+b-c}{2}+k\right)^{\!2}\Bigg)P_{n}\!\Bigg(\!-\!\left(\frac{a+b-c}{2}+k\right)^{\!2}\Bigg)\nonumber \\
 &  & =(a)_{n}(b)_{n}(c)_{n}\, n!\,\delta_{m,n}.\nonumber 
\end{eqnarray}

\subsection{Diagonalization of $B(a,b,c)$}

Assuming (\ref{eq:ineq}) let
\begin{equation}
\hat{P}_{n}(x^{2})=S_{n}\!\left(x^{2};\frac{b+c-a}{2},\frac{a+c-b}{2},\frac{a+b-c}{2}\right)\!\bigg/\!\sqrt{(a)_{n}(b)_{n}(c)_{n}\, n!}.\label{eq:ONpoly}
\end{equation}
The polynomials $\hat{P}_{n}(x^{2})$ are normalized and $\big(\hat{P}_{0}(x^{2}),\hat{P}_{1}(x^{2}),\hat{P}_{2}(x^{2}),\ldots\big)$
is a formal eigenvector of the matrix operator $T(a,b,c)$ corresponding
to the eigenvalue $x^{2}$. Let us introduce the unitary transform
\begin{equation}
U:\ell^{2}(\mathbb{Z}_{+})\to L^{2}\big(\mathcal{M}(a,b,c),\mbox{d}\mu\big)\!:e_{n}\to\hat{P}_{n}(x^{2}),\ n\in\mathbb{Z}_{+},\label{eq:U}
\end{equation}
where
\begin{equation}
\mathcal{M}(a,b,c)=(0,+\infty)\cup\{\lambda_{k};\, k=0,1,\ldots,N(a,b,c)\},\ \lambda_{k}=i\left(\frac{a+b-c}{2}+k\right)\label{eq:calM}
\end{equation}
(the discrete part occurs if and only if $a+b-c<0$), $\mbox{d}\mu(x)=\rho(x)\mbox{d}x$
on $(0,+\infty)$ and
\[
\mu(\{\lambda_{k}\})=\frac{(-1)^{k}\Gamma(c-a)\Gamma(c-b)}{\Gamma(c)\Gamma(c-a-b)}\left(1+\frac{2k}{a+b-c}\right)\frac{(a+b-c)_{k}(a)_{k}(b)_{k}}{(a-c+1)_{k}(b-c+1)_{k}\, k!}.
\]

\begin{remark*} Note that if $0\leq k\leq N(a,b,c)$ and (\ref{eq:ineq_dicrete})
is true then $\mu(\{\lambda_{k}\})>0$ as it should be. In fact, the
signs of the numbers $(a+b-c)_{k}$, $(a-c+1)_{k}$, $(b-c+1)_{k}$
are all equal to $(-1)^{k}$. This is also a standard fact that $(\mathcal{M}(a,b,c),\mbox{d}\mu)$
is a probability space, as seen from (\ref{eq:OGrel}) (with $m=n=0$).
\end{remark*}

\begin{theorem} \label{thm:B_multop} The matrix operator $B(a,b,c)$
on $\ell^{2}(\mathbb{Z}_{+})$ is unitarily equivalent to the multiplication
operator by the function
\begin{equation}
h(x)=\frac{1}{\Gamma(b+c-a)}\left|\Gamma\!\left(\frac{b+c-a}{2}+ix\right)\right|^{2}\label{eq:h}
\end{equation}
acting on $L^{2}(\mathcal{M}(a,b,c),\mbox{d}\mu)$. \end{theorem}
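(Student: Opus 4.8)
The plan is to exploit the fact, established in the Corollary, that $B = B(a,b,c)$ commutes with the Jacobi matrix $T = T(a,b,c)$, together with the explicit identification of the orthogonal polynomials of $T$ with the continuous dual Hahn polynomials. First I would invoke the unitary transform $U$ of \eqref{eq:U}, which by construction sends $e_n$ to the orthonormal polynomial $\hat P_n(x^2)$ and therefore conjugates $T$ into the multiplication operator by $x^2$ on $L^2(\mathcal{M}(a,b,c),\mathrm{d}\mu)$; this is the standard spectral picture for a Jacobi matrix whose moment problem is determinate, the determinacy here being guaranteed because the continuous dual Hahn polynomials have an explicit orthogonality measure with compact-in-the-relevant-variable behaviour and the associated Hamburger problem is known to be determinate. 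Since $B$ commutes with $T$, the operator $UBU^{*}$ commutes with multiplication by $x^2$ on $L^2(\mathcal{M},\mathrm{d}\mu)$, hence (the spectrum of multiplication by $x^2$ being simple, i.e. $\mathrm{d}\mu$ having no multiplicity) $UBU^{*}$ is itself a multiplication operator, by \emph{some} measurable function $\phi(x)$. It then remains to identify $\phi$ with the function $h$ of \eqref{eq:h}.

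To pin down $\phi$, the cleanest route is to test $UBU^{*}$ against the cyclic vector $U e_0 = \hat P_0 = 1$. Concretely, $\phi(x) = (UBU^{*}\mathbf 1)(x)$ as an element of $L^2(\mathrm{d}\mu)$, and expanding in the orthonormal basis $\{\hat P_n(x^2)\}$ gives $\phi(x) = \sum_{n\ge 0} \langle e_n, B e_0\rangle\, \hat P_n(x^2) = \sum_{n\ge 0} B_{n,0}\,\hat P_n(x^2)$, the series converging in $L^2(\mathrm{d}\mu)$. Thus the theorem reduces to the \emph{generating-function identity}
\[
\sum_{n=0}^{\infty} B_{n,0}\,\hat P_n(x^2) \;=\; \frac{1}{\Gamma(b+c-a)}\left|\Gamma\!\left(\frac{b+c-a}{2}+ix\right)\right|^{2},
\]
where by \eqref{eq:Babc} one has the explicit value
\[
B_{n,0} = \frac{\Gamma(n+a)}{\Gamma(n+b+c)}\sqrt{\frac{\Gamma(n+b)\Gamma(n+c)\,\Gamma(b)\Gamma(c)}{\Gamma(n+a)\,n!\,\Gamma(a)}},
\]
and $\hat P_n(x^2)$ is the normalized ${}_3F_2$ from \eqref{eq:ONpoly}. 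Substituting the hypergeometric representation of $S_n$, the left-hand side becomes a double sum; interchanging the order of summation (justified by absolute convergence, which one checks using the Stirling asymptotics already recorded for $B_{j,k}$ and the standard growth bounds for the continuous dual Hahn polynomials on the spectrum) collapses the inner sum over $n$ to a single ${}_2F_1$ or a Gauss-summable ${}_3F_2$ at argument $1$. Evaluating that closed form — using the Gauss summation theorem, or equivalently recognizing a beta-integral of the form $\int_0^\infty t^{(b+c-a)/2-1+ix}(1+t)^{-(b+c-a)}\,\mathrm{d}t$ — should yield exactly $h(x)$; this mirrors the continuum computation \eqref{eq:fce_g} with $\ell = b+c-a-1$.

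The main obstacle I expect is precisely this last summation: controlling the interchange of the double series and then recognizing/evaluating the resulting hypergeometric sum in closed form. An alternative, perhaps more robust, way to organize the identification — which sidesteps delicate convergence questions — is to use the intertwining relation \eqref{eq:intertwinBC}, $B(a,b,c)\,C = C\,B(a+1,b+1,c)$, from the Lemma: transported through $U$ and the analogous transform $\tilde U$ for the parameters $(a+1,b+1,c)$, the matrix $C$ (a first-order difference operator) becomes an explicit first-order difference/multiplication operator in the variable $x$, and the intertwining turns into a functional equation relating $\phi_{a,b,c}(x)$ to $\phi_{a+1,b+1,c}(x)$. Combined with the shift relation $h_{a,b,c}(x) = \tfrac{1}{b+c-a}\,\bigl((\tfrac{b+c-a}{2})^2 + x^2\bigr)\,h_{a+1,b+1,c}(x)$ enjoyed by the candidate function, and a normalization check at a single point (e.g. comparing $\langle e_0, B e_0\rangle = B_{0,0} = \Gamma(a)/\Gamma(b+c)$ against $\int h\,\mathrm{d}\mu$, using that $\mathrm{d}\mu$ is a probability measure), one can bootstrap $\phi = h$ for all admissible $(a,b,c)$ by induction on suitable shifts, falling back to a base case (such as the Hilbert matrix $a=\theta$, $b=\theta$, $c=1$, or a case where the sum is classically known) where the generating-function identity is already available in the literature. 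Finally, once $B \cong M_h$ is established, the statements about the spectrum in the abstract — purely absolutely continuous part equal to the range of $h$ on $(0,\infty)$, namely $[0,M(a,b,c)]$ with $M(a,b,c) = h(0) = \Gamma(b+c-a)\big/\Gamma\!\big(\tfrac{b+c-a}{2}\big)^2$ after simplification, plus the finite point spectrum contributed by the mass points $\lambda_k$ — follow immediately, since $h$ is real-analytic, even, strictly decreasing on $(0,\infty)$, and $\mathrm{d}\mu$ is purely absolutely continuous there.
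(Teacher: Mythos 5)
Your structural reduction is exactly the paper's: $U$ sends $T$ to multiplication by $x^{2}$, simplicity of that spectrum forces $UBU^{*}$ to be multiplication by some $\phi$, and $\phi(x)=\sum_{n}B_{n,0}\hat P_{n}(x^{2})$. The gap is in the one step you defer, the evaluation of this sum, and the primary route you propose for it does not work. If you substitute the terminating ${}_{3}F_{2}$ for $S_{n}$ and interchange the two summations, the inner sum over $n$ at fixed inner index $m$ becomes (after shifting $n=m+l$) a series proportional to ${}_{2}F_{1}(b+m,c+m;b+c+m;1)$, whose terms decay like $l^{-1}$; it diverges for every $m\geq 1$. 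The double series is not absolutely convergent, so the "justified by absolute convergence" interchange and the promised Gauss-summable collapse both fail. The paper's way around this is precisely the regularization you would need: write the coefficient as $\Gamma(b)\Gamma(c)/(\Gamma(j+b+c)\,j!)=\mathrm{B}(b,j+c)/((c)_{j}\,j!)$ and insert the Euler integral $\mathrm{B}(b,j+c)=\int_{0}^{1}(1-t)^{b-1}t^{j+c-1}\,\mathrm{d}t$, so that the sum over $j$ acquires a factor $t^{j}$ with $0<t<1$ and becomes the known generating function $\sum_{n}t^{n}S_{n}/((\alpha+\beta)_{n}n!)=(1-t)^{-\gamma+ix}{}_{2}F_{1}(\alpha+ix,\beta+ix;\alpha+\beta;t)$; only after termwise integration does one reach a ${}_{2}F_{1}$ at argument $1$ with $\Re(\gamma-\alpha-\beta)=(b+c-a)/2>0$, where Gauss's theorem legitimately applies.

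Your fallback route also does not close the argument as stated. The shift $(a,b,c)\mapsto(a+1,b+1,c)$ implemented by $C$ leaves $b+c-a$ invariant, so the candidate function satisfies $h_{a,b,c}=h_{a+1,b+1,c}$ exactly; your claimed relation $h_{a,b,c}(x)=\frac{1}{b+c-a}\bigl((\frac{b+c-a}{2})^{2}+x^{2}\bigr)h_{a+1,b+1,c}(x)$ is false. Correspondingly, transporting $B\,C=C\,\tilde B$ through $U$ and $\tilde U$ only yields $\phi_{a,b,c}=\phi_{a+1,b+1,c}$ on the support of the symbol of $C$ -- consistent with the answer but not determining it -- and the orbit of the shift starting from the Hilbert-matrix base case $(\theta,\theta,1)$ reaches only parameters with $b=a$, $c=1$, not the full three-parameter family. (A small separate slip: at the end you invert $M(a,b,c)$; it equals $\Gamma(\frac{b+c-a}{2})^{2}/\Gamma(b+c-a)$, not its reciprocal.) So the skeleton of your proof is right and matches the paper, but the identity $\sum_{n}B_{n,0}\hat P_{n}(x^{2})=h(x)$ still needs the Beta-integral/generating-function argument to be actually established.
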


\begin{remark*} Once more, this result should be compared to the
integral operator $K_{\ell}$ which is unitarily equivalent to the
multiplication operator by the function $g(\xi)$ introduced in (\ref{eq:fce_g})
and acting on $L^{2}(\mathbb{R},\mbox{d}\xi)$. Again, one has to
put $\ell=b+c-a-1$. \end{remark*}

\begin{proof} Directly from the construction it follows that $U$
diagonalizes $T$, namely $UTU^{-1}$ equals the multiplication operator
by $x^{2}$. Since $B$ and $T$ commute and the spectrum of $T$
is simple, $UBU^{-1}$ is necessarily a multiplication operator, too,
say by a function $h(x)$; see, for instance, Lemma 6.4 in \cite{Varadarajan}
or Proposition 1.9 in Supplement~1 of \cite{BerezinShubin}. One
derives
\begin{eqnarray*}
h(x) & = & h(x)\hat{P}_{0}(x^{2})=UBe_{0}=\sum_{j=0}^{\infty}B_{0,j}\hat{P}_{j}(x^{2})\\
 & = & \sum_{j=0}^{\infty}\frac{\Gamma(b)\Gamma(c)}{\Gamma(j+b+c)\, j!}\, S_{j}\!\left(x^{2};\frac{b+c-a}{2},\frac{a+c-b}{2},\frac{a+b-c}{2}\right)\!.
\end{eqnarray*}
More conveniently, one can rewrite the expression in terms of the
Beta function,
\[
h(x)=\sum_{j=0}^{\infty}\frac{\mbox{B}(b,j+c)}{(c)_{j}\, j!}\, S_{j}\!\left(x^{2};\frac{b+c-a}{2},\frac{a+c-b}{2},\frac{a+b-c}{2}\right),
\]
and so
\[
h(x)=\int_{0}^{1}(1-t)^{-1+b}\, t^{-1+c}\!\left(\sum_{j=0}^{\infty}\frac{t^{j}}{(c)_{j}\, j!}\, S_{j}\!\left(x^{2};\frac{b+c-a}{2},\frac{a+c-b}{2},\frac{a+b-c}{2}\right)\!\right)\!\mbox{d}t.
\]
Making use of the generating function (see Eq. 9.3.12 in \cite{KoekoekLeskySwarttouw})
\[
\sum_{n=0}^{\infty}\frac{t^{n}}{(\alpha+\beta)_{n}n!}\, S_{n}(x^{2};\alpha,\beta,\gamma)=(1-t)^{-\gamma+ix}\,_{2}F_{1}(\alpha+ix,\beta+ix;\alpha+\beta;t)
\]
one has
\[
h(x)=\int_{0}^{1}(1-t)^{-1+(b+c-a)/2+ix}t^{-1+c}\,_{2}F_{1}\!\left(\frac{b+c-a}{2}+ix,\frac{a+c-b}{2}+ix;c;t\right)\!\mbox{d}t.
\]
Hence
\begin{eqnarray*}
h(x) & = & \sum_{n=0}^{\infty}\frac{1}{(c)_{n}\, n!}\left(\frac{b+c-a}{2}+ix\right)_{\! n}\left(\frac{a+c-b}{2}+ix\right)_{\! n}\\
 &  & \times\,\int_{0}^{1}(1-t)^{-1+(b+c-a)/2+ix}t^{-1+n+c}\,\mbox{d}t\\
 & = & \mbox{B}\!\left(\frac{b+c-a}{2}+ix,c\right)\\
 &  & \times\,\,_{2}F_{1}\!\left(\frac{b+c-a}{2}+ix,\frac{a+c-b}{2}+ix;\frac{b+c-a}{2}+c+ix;1\right)\!.
\end{eqnarray*}
Recalling that \cite[Eq. 15.1.20]{AbramowitzStegun}
\[
\,_{2}F_{1}(\alpha,\beta;\gamma;1)=\frac{\Gamma(\gamma)\Gamma(\gamma-\alpha-\beta)}{\Gamma(\gamma-\alpha)\Gamma(\gamma-\beta)}\ \text{ }\text{if}\ \Re(\gamma-\alpha-\beta)>0
\]
we finally obtain the desired expression. \end{proof}

\subsection{The spectrum of $B(a,b,c)$}

The function $h(x)$ defined in (\ref{eq:h}) is bounded on the positive
half-line and therefore the matrix $B(a,b,c)$ represents a bounded
operator on $\ell^{2}(\mathbb{Z}_{+})$. In what follows, $B(a,b,c)$
is interpreted in this manner.

\begin{corollary} \label{thm:spec_abscont} The absolutely continuous
part of the spectrum of $B(a,b,c)$ is simple and fills the interval
$[0,M(a,b,c)]$ where
\[
M(a,b,c)=\frac{1}{\Gamma(b+c-a)}\,\Gamma\!\left(\frac{b+c-a}{2}\right)^{\!2}\!.
\]
\end{corollary}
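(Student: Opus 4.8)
The plan is to derive Corollary~\ref{thm:spec_abscont} directly from Theorem~\ref{thm:B_multop} together with the description of the measure space $\mathcal{M}(a,b,c)$ in \eqref{eq:calM}. By Theorem~\ref{thm:B_multop}, $B(a,b,c)$ is unitarily equivalent to multiplication by $h(x)$ on $L^{2}(\mathcal{M}(a,b,c),\mathrm{d}\mu)$, so its spectrum is the union of the essential range of $h$ with respect to the absolutely continuous part $\rho(x)\,\mathrm{d}x$ on $(0,+\infty)$ (this contributes the absolutely continuous part of the spectrum, since $\rho$ is strictly positive and continuous there) and the countable set of values $h(\lambda_{k})$ at the mass points $\lambda_{k}$ (which contributes eigenvalues). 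The corollary concerns only the a.c.\ part, so the task reduces to analyzing the range of the single function $h(x)=\Gamma(b+c-a)^{-1}\,|\Gamma(\tfrac{b+c-a}{2}+ix)|^{2}$ on $x\in(0,+\infty)$.

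First I would set $\nu=(b+c-a)/2>0$ by \eqref{eq:ineq} and study $\varphi(x)=|\Gamma(\nu+ix)|^{2}=\Gamma(\nu+ix)\Gamma(\nu-ix)$ for $x\ge 0$. The key facts are: $\varphi$ is continuous on $[0,+\infty)$; $\varphi(0)=\Gamma(\nu)^{2}$, which gives the claimed endpoint $M(a,b,c)=\Gamma(\nu)^{2}/\Gamma(2\nu)$; and $\varphi(x)\to 0$ as $x\to+\infty$. The decay at infinity follows from the standard asymptotic $|\Gamma(\nu+ix)|\sim\sqrt{2\pi}\,|x|^{\nu-1/2}e^{-\pi|x|/2}$ (e.g.\ from the reflection-type formula or Stirling), so $\varphi(x)$ decays exponentially. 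The next point is that $\varphi$ is \emph{strictly decreasing} on $[0,+\infty)$: differentiating $\log\varphi(x)=\log\Gamma(\nu+ix)+\log\Gamma(\nu-ix)$ gives $(\log\varphi)'(x)=i\bigl(\psi(\nu+ix)-\psi(\nu-ix)\bigr)=-2\,\Im\psi(\nu+ix)$, and using the series $\psi(z)=-\gamma+\sum_{m\ge 0}\bigl(\tfrac{1}{m+1}-\tfrac{1}{m+z}\bigr)$ one finds $\Im\psi(\nu+ix)=\sum_{m\ge 0}\tfrac{x}{(m+\nu)^{2}+x^{2}}>0$ for $x>0$; hence $(\log\varphi)'(x)<0$ on $(0,+\infty)$. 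Therefore $\varphi$ maps $[0,+\infty)$ bijectively and continuously onto $(0,\Gamma(\nu)^{2}]$, and consequently $h$ maps $(0,+\infty)$ onto $(0,M(a,b,c))$ with $h$ a homeomorphism onto its image.

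It then follows that the essential range of $h$ restricted to $(0,+\infty)$ with the measure $\rho(x)\,\mathrm{d}x$ is exactly the closed interval $[0,M(a,b,c)]$: the open interval $(0,M(a,b,c))$ is covered because $h$ is a continuous surjection onto it and $\rho>0$ everywhere on $(0,+\infty)$, so no subinterval of values is $\mu$-null; the endpoint $M(a,b,c)$ is a limit point of attained values (approached as $x\to 0^{+}$) and hence lies in the essential range; and $0$ is likewise a limit point (approached as $x\to+\infty$). Moreover the spectrum here is purely absolutely continuous and simple: since $h$ is a strictly monotone $C^{1}$ diffeomorphism from $(0,+\infty)$ onto $(0,M(a,b,c))$, the pushforward of $\rho(x)\,\mathrm{d}x$ under $h$ is an absolutely continuous measure on $(0,M(a,b,c))$ with a.e.\ positive density, and multiplication by the independent variable on an $L^{2}$ space of such a measure has simple, purely a.c.\ spectrum equal to the support. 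This yields precisely the assertion of the corollary. The only mildly delicate point — the ``main obstacle,'' though it is routine — is justifying the strict monotonicity of $\varphi$ cleanly (so as to conclude injectivity and hence simplicity), for which the digamma series representation above is the efficient tool; everything else is a direct reading-off from Theorem~\ref{thm:B_multop}.
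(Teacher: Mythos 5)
Your proof is correct and follows essentially the same route as the paper: read off the absolutely continuous spectrum from Theorem~\ref{thm:B_multop} as the closure of $h((0,+\infty))$, note $h(0)=M(a,b,c)$ and $h(+\infty)=0$, and establish strict monotonicity of $x\mapsto|\Gamma(\nu+ix)|^{2}$. The only (cosmetic) difference is that you prove the monotonicity via the series for $\Im\psi(\nu+ix)$ rather than the Weierstrass product for $1/|\Gamma(u+ix)|^{2}$ used in the paper — these are the same computation after taking a logarithmic derivative — and you spell out the simplicity/pushforward argument that the paper leaves implicit.
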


\begin{proof} From Theorem~\ref{thm:B_multop} one infers that the
absolutely continuous spectrum of $B(a,b,c)$ fills the closure of
$h([0,+\infty))$. We have $h(0)=M(a,b,c)$ and $h(+\infty)=0$. Moreover,
for any $u$ real fixed, the function $f(x)=\left|\Gamma(u+ix)\right|^{2}$
is monotone decreasing on $(0,+\infty)$. This is immediately seen
from the product formula for the Gamma function \cite[Eq. 6.1.3]{AbramowitzStegun}
yielding
\[
\frac{1}{\left|\Gamma(u+ix)\right|^{2}}=\left(u^{2}+x^{2}\right)e^{2\gamma u}\prod_{n=1}^{\infty}\left[\left(\left(1+\frac{u}{n}\right)^{2}+\left(\frac{x}{n}\right)^{2}\right)e^{-2u/n}\right]
\]
where $\gamma$ is Euler's constant. The assertion follows. \end{proof}

Suppose (\ref{eq:ineq_dicrete}) and recall the notation introduced
in (\ref{eq:N}), (\ref{eq:calM}). Then the point spectrum of $T$
consists of the points $\lambda_{k}^{\,2}$, $k=0,1,\ldots,N(a,b,c)$,
each eigenvalue is simple and for an eigenvector corresponding to
$\lambda_{k}^{\,2}$ one can choose
\[
v_{k}=\sum_{j=0}^{\infty}\hat{P}_{j}(\lambda_{k}^{\,2})\, e_{j}.
\]
These vectors are also eigenvectors of $B$.

\begin{corollary} \label{thm:spec_point} Assuming (\ref{eq:ineq})
and that $a+c-b$, $a+b-c$ are both nonnegative, the point spectrum
of $B(a,b,c)$ is empty. Assuming (\ref{eq:ineq_dicrete}), the point
spectrum of $B(a,b,c)$ equals
\[
\spec_{p}B(a,b,c)=\{\beta_{0},\beta_{1},\ldots,\beta_{N(a,b,c)}\}
\]
 where
\[
\beta_{k}:=h(\lambda_{k})=\frac{\Gamma(b+k)\Gamma(c-a-k)}{\Gamma(b+c-a)}\,,
\]
and it holds true that
\begin{equation}
\beta_{0}>\beta_{1}>\ldots>\beta_{N(a,b,c)}>M(a,b,c).\label{eq:beta_order}
\end{equation}
In particular, all eigenvalues are simple. For an eigenvector corresponding
to $\beta_{k}$ one can choose the vector $v_{k}$ with the components

\[
\left\langle e_{n},v_{k}\right\rangle =\hat{P}_{n}(\lambda_{k}^{\,2})=\sqrt{\frac{(b)_{n}(c)_{n}}{(a)_{n}\, n!}}\,\,_{3}F_{2}(-n,b+k,c-a-k;b,c;1).
\]
Its norm fulfills
\begin{equation}
\|v_{k}\|^{2}=\frac{\Gamma(c)\Gamma(c-a-b-k+1)\, k!}{(c-a-b-2k)\Gamma(c-a-k)\Gamma(c-b-k)(a)_{k}(b)_{k}}\,.\label{eq:vnorm}
\end{equation}
\end{corollary}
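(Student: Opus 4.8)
The plan is to read off everything from the unitary equivalence of Theorem~\ref{thm:B_multop}: for the operator of multiplication by a function $h$ on an $L^{2}$ space, a number $\beta$ is an eigenvalue exactly when $\mu(\{h=\beta\})>0$, and then the eigenspace is $L^{2}(\{h=\beta\},\mu)$. Under (\ref{eq:ineq}) with $a+c-b\geq0$ and $a+b-c\geq0$ the measure $\mu$ is purely absolutely continuous on $(0,+\infty)$, while $h$ is strictly decreasing there (this is what the product formula in the proof of Corollary~\ref{thm:spec_abscont} gives); hence every level set of $h$ is a single point, of $\mu$-measure zero, so $\spec_{p}B(a,b,c)=\emptyset$. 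Under (\ref{eq:ineq_dicrete}) the measure $\mu$ carries in addition the atoms $\lambda_{0},\dots,\lambda_{N(a,b,c)}$, each of strictly positive mass (the Remark following the definition of $\mu$), and the monotonicity argument again shows the absolutely continuous part contributes no eigenvalue; therefore the eigenvalues of $B(a,b,c)$ are precisely the numbers $\beta_{k}:=h(\lambda_{k})$, the eigenvalue $\beta_{k}$ is simple once the $\beta_{k}$ are pairwise distinct (the level set $\{h=\beta_{k}\}$ then meets $\{\lambda_{j}\}$ only at $\lambda_{k}$ and meets $(0,+\infty)$ in at most one $\mu$-null point), and $v_{k}$ spans the corresponding eigenspace.

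It remains to evaluate $\beta_{k}$. Since $v_{k}$ is a $B$-eigenvector (as observed just before the statement), pairing $Bv_{k}=\beta_{k}v_{k}$ with $e_{0}$ and using $\langle e_{0},v_{k}\rangle=\hat P_{0}(\lambda_{k}^{\,2})=1$ gives
\[
\beta_{k}=\langle Be_{0},v_{k}\rangle=\sum_{j=0}^{\infty}B_{0,j}\,\hat P_{j}(\lambda_{k}^{\,2}),
\]
the series converging absolutely because $Be_{0},v_{k}\in\ell^{2}(\mathbb{Z}_{+})$. This is literally the series evaluated in the proof of Theorem~\ref{thm:B_multop}, now with $x$ replaced by $\lambda_{k}$. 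The relevant substitution is $\tfrac{b+c-a}{2}+i\lambda_{k}=c-a-k$ and $\tfrac{b+c-a}{2}-i\lambda_{k}=b+k$, and both of these numbers are strictly positive for $0\leq k\leq N(a,b,c)$ — indeed $c-a-k>\tfrac{b+c-a}{2}>0$ since $N(a,b,c)<(c-a-b)/2$, and $b+k\geq b>0$ — so the Beta-function rewriting of $B_{0,j}$, the generating function (Eq. 9.3.12 in \cite{KoekoekLeskySwarttouw}), the Euler integral and the Gauss summation of ${}_{2}F_{1}$ at $1$ all remain valid (in particular the integrand stays integrable at $t=1$). Carrying the computation through yields $\beta_{k}=\Gamma(b+k)\Gamma(c-a-k)/\Gamma(b+c-a)$.

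For the ordering (\ref{eq:beta_order}): each $\beta_{k}>0$ because $b+k,\,c-a-k>0$, and $\beta_{k+1}/\beta_{k}=(b+k)/(c-a-k-1)<1$ for $0\leq k\leq N(a,b,c)-1$, since $b+k<c-a-k-1$ amounts to $2k+1<c-a-b$, which holds because $k\leq N(a,b,c)-1<(c-a-b)/2-1$. The one point that needs an idea rather than bookkeeping is $\beta_{N}>M(a,b,c)$: the numbers $b+N$ and $c-a-N$ are positive, distinct (since $2N<c-a-b$), and have arithmetic mean $\tfrac{b+c-a}{2}$, so strict convexity of $\log\Gamma$ on $(0,+\infty)$ gives $\Gamma(b+N)\Gamma(c-a-N)>\Gamma(\tfrac{b+c-a}{2})^{2}$; dividing by $\Gamma(b+c-a)$ yields $\beta_{N}>M(a,b,c)$. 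In particular all $\beta_{k}$ are distinct, so every eigenvalue is simple. Finally, the components $\langle e_{n},v_{k}\rangle=\hat P_{n}(\lambda_{k}^{\,2})$ turn into the stated ${}_{3}F_{2}$ upon inserting $x=\lambda_{k}$ (again via $\tfrac{b+c-a}{2}\pm i\lambda_{k}\in\{c-a-k,\,b+k\}$) into the hypergeometric form of $\hat P_{n}$, and since $v_{k}=\mu(\{\lambda_{k}\})^{-1}U^{-1}\chi_{\{\lambda_{k}\}}$ with $\chi_{\{\lambda_{k}\}}$ the indicator of the atom and $U$ unitary, one gets $\|v_{k}\|^{2}=\mu(\{\lambda_{k}\})^{-2}\|\chi_{\{\lambda_{k}\}}\|^{2}=\mu(\{\lambda_{k}\})^{-1}$, which is rewritten as (\ref{eq:vnorm}) by the reflection formula (to absorb the $(-1)^{k}$ and the Pochhammer symbols with nonpositive arguments) and routine Gamma-function algebra. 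The main obstacle is thus the single inequality $\beta_{N}>M(a,b,c)$, settled by strict log-convexity of $\Gamma$; the rest is the bookkeeping just indicated.
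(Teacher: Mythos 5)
Your proposal is correct and follows essentially the same route as the paper: identify the point spectrum from the multiplication-operator picture of Theorem~\ref{thm:B_multop}, get $\beta_{N}>M(a,b,c)$ (and the ordering) from strict log-convexity of $\Gamma$, and obtain $\|v_{k}\|^{2}=\mu(\{\lambda_{k}\})^{-1}$ from unitarity of $U$ and the mass of the atom. The only cosmetic differences are that you re-derive $\beta_{k}$ by summing $\sum_{j}B_{0,j}\hat{P}_{j}(\lambda_{k}^{\,2})$ instead of substituting $x=\lambda_{k}$ into (\ref{eq:h}), and you prove $\beta_{k}>\beta_{k+1}$ by the ratio $(b+k)/(c-a-k-1)<1$ rather than via the monotonicity of $t\mapsto\Gamma(u-t)\Gamma(u+t)$, which the paper uses for the whole chain.
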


\begin{proof} Theorem~\ref{thm:B_multop} implies that the eigenvalues
of $B(a,b,c)$ are exactly the values $h(\lambda_{k})$. $k=0,1,\ldots,N(a,b,c)$.
It remains to show (\ref{eq:beta_order}), (\ref{eq:vnorm}). As far
as (\ref{eq:beta_order}) is concerned, one has
\[
\Gamma(b+c-a)\beta_{k}=\Gamma\!\left(\frac{b+c-a}{2}+\!\left(\frac{a+b-c}{2}+k\right)\right)\!\Gamma\!\left(\frac{b+c-a}{2}-\!\left(\frac{a+b-c}{2}+k\right)\right)
\]
(recall that $a+b-c+2k<0$ for $0\leq k\leq N(a,b,c)$). Hence it
suffices to observe that the function $f(t)=\log(\Gamma(t))$ is convex
on the positive half-line. It is so because
\[
f''(t)=\psi'(t)=\sum_{k=0}^{\infty}\frac{1}{(k+t)^{2}}
\]
where $\psi$ is the digamma function. It follows that, for any $u>0$,
the function $g(t)=\Gamma(u-t)\Gamma(u+t)$ is strictly increasing
on the interval $[0,u)$.

Concerning (\ref{eq:vnorm}), let $\ensuremath{f_{k}\in L^{2}(\mathcal{M}(a,b,c),\mbox{d}\mu(x))}$
be defined as follows: $f_{k}(\lambda_{j})=\delta_{k,j}$ and $f_{k}(x)\equiv0$
on $(0,\infty)$. Then $Uv_{k}=cf_{k}$ for some $c\in\mathbb{C}$.
One immediately finds that
\[
c=(Uv_{k})(\lambda_{k})=\sum_{j=0}^{\infty}\hat{P}_{j}(\lambda_{k}^{\,2})^{2}=\|v_{k}\|^{2}>0.
\]
On the other hand,
\[
\|v_{k}\|^{2}=\|Uv_{k}\|^{2}=c^{2}\|f_{k}\|^{2}=c^{2}\mu(\{\lambda_{k}\})=\|v_{k}\|^{4}\,\mu(\{\lambda_{k}\}).
\]
Whence
\begin{eqnarray*}
\frac{1}{\|v_{k}\|^{2}} & = & \mu(\{\lambda_{k}\})\\
 & = & \frac{(-1)^{k}\Gamma(c-a)\Gamma(c-b)}{\Gamma(c)\Gamma(c-a-b)}\left(1+\frac{2k}{a+b-c}\right)\frac{(a+b-c)_{k}(a)_{k}(b)_{k}}{(a-c+1)_{k}(b-c+1)_{k}\, k!}\\
\noalign{\medskip} & = & \frac{(c-a-b-2k)\Gamma(c-a-k)\Gamma(c-b-k)(a)_{k}(b)_{k}}{\Gamma(c)\Gamma(c-a-b-k+1)\, k!}.
\end{eqnarray*}
This shows (\ref{eq:vnorm}). \end{proof}

\begin{remark} Still assuming (\ref{eq:ineq}), $B(a,b,c)$ is a
positive bounded operator on $\ell^{2}(\mathbb{Z}_{+})$ and one has
\begin{equation}
\left\Vert B(a,b,c)\right\Vert =\frac{1}{\Gamma(b+c-a)}\,\Gamma\!\left(\frac{b+c-a}{2}\right)^{\!2}\label{eq:normB1}
\end{equation}
if $a+b-c\geq0$, $a+c-b\geq0$ (at least one of the expressions is
necessarily positive) and
\begin{equation}
\left\Vert B(a,b,c)\right\Vert =\frac{\Gamma(b)\Gamma(c-a)}{\Gamma(b+c-a)}\label{eq:normB2}
\end{equation}
if $a+b-c<0$, $a+c-b\geq0$, and similarly if $a+b-c\geq0$, $a+c-b<0$.
This means that for every square summable real sequence $\{\xi_{k}\}$,
\[
0\leq\sum_{j=0}^{\infty}\sum_{k=0}^{\infty}B(a,b,c)_{j,k}\xi_{j}\xi_{k}\leq\left\Vert B(a,b,c)\right\Vert \left(\sum_{k=0}^{\infty}\xi_{k}^{\,2}\right)\!,
\]
and the bound is best possible. Equivalently one can also say that
for any real sequence $\{\xi_{k}\}$ and all $n\in\mathbb{Z}_{+}$,
\[
0\leq\sum_{j=0}^{n}\sum_{k=0}^{n}\frac{\Gamma(j+k+a)}{\Gamma(j+k+b+c)}\,\xi_{j}\xi_{k}\leq\left\Vert B(a,b,c)\right\Vert \left(\sum_{k=0}^{n}\frac{\Gamma(k+a)\, k!}{\Gamma(k+b)\Gamma(k+c)}\,\xi_{k}{}^{2}\right)\!,
\]
with $\left\Vert B(a,b,c)\right\Vert $ being specified in (\ref{eq:normB1}),
(\ref{eq:normB2}).

The Hilbert double series inequality is a particular case for $a=\theta$,
$b=\theta$ and $c=1$ assuming that $\theta\geq1/2$. Explicitly,
for any real square summable sequence $\left\{ \xi_{k}\right\} $,
\[
0\leq\sum_{j=0}^{\infty}\sum_{k=0}^{\infty}\frac{\xi_{j}\xi_{k}}{j+k+\theta}\leq\pi\!\left(\sum_{k=0}^{\infty}\xi_{k}^{\,2}\right)^{\!2}\!.
\]
For $a=\theta$, $b=\theta$, $c=1$ and $0<\theta<1/2$ one gets
the inequality
\[
0\leq\sum_{j=0}^{\infty}\sum_{k=0}^{\infty}\frac{\xi_{j}\xi_{k}}{j+k+\theta}\leq\frac{\pi}{\sin(\pi\theta)}\left(\sum_{k=0}^{\infty}\xi_{k}^{\,2}\right)^{\!2}\!.
\]
Again, all bounds are best possible. \end{remark}

\section{Hilbert's matrix and the Bergman-Hilbert matrix}

\subsection{Hilbert's matrix}

As already remarked above, $H(\theta):=B(\theta,\theta,1)$ is the
generalized Hilbert matrix,
\begin{equation}
H(\theta)_{j,k}=\frac{1}{j+k+\theta},\ j,k=0,1,2,\ldots.\label{eq:H}
\end{equation}
By our assumptions on the parameters, $\theta$ is positive. By Corollaries
\ref{thm:spec_abscont} and \ref{thm:spec_point}, the absolutely
continuous part of the spectrum is simple filling the interval $[0,\pi]$
independently of $\theta$. The point spectrum is nonempty if and
only if $0<\theta<1/2$ and if so it consists of the single simple
eigenvalue $\beta_{0}=\pi/\sin(\pi\theta)$.

Observing, however, that the defining expression for $H(\theta)$
is free of square roots, the range of $\theta$ can naturally be extended
to $\theta\in\mathbb{R}\backslash(-\mathbb{Z}_{+})$. The diagonalization
method, as exposed in Section~\ref{sec:diagonalization}, can be
applied to $H(\theta)$ without essential modifications even with
this extended range. This is why we confine ourselves just to sketching
some basic steps.

First of all, $H(\theta)$ commutes with the Jacobi matrix $T(\theta)$
with the entries
\[
T(\theta)_{j,j}=2j\,(j+\theta)-1/4+\theta,\ T(\theta)_{j,j+1}=T(\theta)_{j+1,j}=-(j+1)(j+\theta),\ j=0,1,2,\ldots,
\]
and $T(\theta)_{j,k}=0$ otherwise. Referring to (\ref{eq:ONpoly}),
the associated normalized orthogonal polynomials are given by
\begin{eqnarray*}
\hat{P}_{n}(x^{2}) & = & \frac{1}{n!\,(\theta)_{n}}\, S_{n}\!\left(x^{2};-\frac{1}{2}+\theta,\frac{1}{2},\frac{1}{2}\right)\\
 & = & \frac{(\theta)_{n}}{n!}\,\,_{3}F_{2}\!\left(-n,-\frac{1}{2}+\theta+ix,-\frac{1}{2}+\theta-ix;\theta,\theta;1\right)\!.
\end{eqnarray*}
It is useful to observe that the polynomials $\hat{P}_{n}(x^{2})$
can also be expressed in terms of the Wilson polynomials \cite{Wilson}.
By definition, for $n\in\mathbb{Z}_{+}$,
\[
\frac{W_{n}(x^{2};\alpha,\beta,\gamma,\delta)}{(\alpha+\beta)_{n}(\alpha+\gamma)_{n}(\alpha+\delta)_{n}}=\,_{4}F_{3}(-n,n+\alpha+\beta+\gamma+\delta-1,\alpha+ix,\alpha-ix;\alpha+\beta,\alpha+\gamma,\alpha+\delta;1).
\]
By inspection of \cite[Eqs.~9.1.4, 9.1.5]{KoekoekLeskySwarttouw}
one finds that
\begin{equation}
\hat{P}_{n}(x^{2})=\frac{4^{n}}{n!\,(\theta)_{2n}}\, W_{n}\!\left(\frac{x^{2}}{4};-\frac{1}{4}+\frac{\theta}{2},\frac{1}{4},\frac{1}{4}+\frac{\theta}{2},\frac{3}{4}\right)\!.\label{eq:Phat_Wilson}
\end{equation}

According to (\ref{eq:OGrel}), if $\theta<1/2$ then the orthogonality
relation reads
\begin{equation}
\int_{0}^{\infty}\hat{P}_{m}(x^{2})\hat{P}_{n}(x^{2})\rho(x)\,\mbox{d}x+\sum_{k=0}^{N(\theta)}\mu(\{\lambda_{k}\})\,\hat{P}_{m}(\lambda_{k}^{\,2})\hat{P}_{n}(\lambda_{k}^{\,2})=\delta_{m,n}\label{eq:OG_rel_discr2}
\end{equation}
where $N(\theta)=\left\lceil -1/2-\theta\right\rceil $,
\[
\rho(x)=\frac{2x\tanh(\pi x)}{\Gamma(\theta)^{2}}\left|\Gamma\!\left(-\frac{1}{2}+\theta+ix\right)\right|^{2}
\]
and
\[
\lambda_{k}=i\left(-\frac{1}{2}+\theta+k\right)\!,\ \mu(\{\lambda_{k}\})=\frac{\Gamma(1-\theta)^{2}\,(1-2\theta-2k)}{k!\,\Gamma(2-2\theta-k)}.
\]
The sum on the LHS of (\ref{eq:OG_rel_discr2}) is absent if $\theta\geq1/2$.

\begin{remark} Strictly speaking, the orthogonality relation, as
described for instance in \cite[Eqs.~9.3.2, 9.3.3]{KoekoekLeskySwarttouw},
covers only the cases when $\theta>0$. Nevertheless, making use of
(\ref{eq:Phat_Wilson}) and a very general complex orthogonality relation
for the Wilson polynomials stated in \cite{Wilson}, one can quite
straightforwardly extend the desired formula to all values $\theta<1/2$,
$-\theta\notin\mathbb{Z}_{+}$. Given $\alpha,\beta,\gamma,\delta\in\mathbb{C}$,
write for short $\ensuremath{W_{n}(z)\equiv W_{n}(z;\alpha,\beta,\gamma,\delta)}$,
$n\in\mathbb{Z}_{+}$. As proved in \cite{Wilson},
\[
\frac{1}{2\pi i}\int_{C}f(z)f(-z)W_{m}(z^{2})W_{n}(z^{2})\,\mbox{d}z=\delta_{m,n}Mh_{n}
\]
where
\begin{eqnarray*}
f(z) & = & \frac{\Gamma(\alpha-z)\Gamma(\beta-z)\Gamma(\gamma-z)\Gamma(\delta-z)}{\Gamma(-2z)},\\
M & = & \frac{2\,\Gamma(\alpha+\beta)\Gamma(\alpha+\gamma)\Gamma(\alpha+\delta)\Gamma(\beta+\gamma)\Gamma(\beta+\delta)\Gamma(\gamma+\delta)}{\Gamma(\alpha+\beta+\gamma+\delta)},
\end{eqnarray*}
and
\[
h_{n}=\frac{n!\,(\alpha+\beta+\gamma+\delta-1)_{n}(\alpha+\beta)_{n}(\alpha+\gamma)_{n}(\alpha+\delta)_{n}(\beta+\gamma)_{n}(\beta+\delta)_{n}(\gamma+\delta)_{n}}{(\alpha+\beta+\gamma+\delta)_{2n}}.
\]
The contour $C$ is the imaginary axis deformed so as to separate
the set of poles of $f(z)$ from the set of poles of $f(-z)$, assuming
these two sets to be disjoint. In particular, if $\alpha$, $\beta$,
$\gamma$ and $\delta$ are positive, $C$ may be taken to be the
imaginary axis. The orthogonality measure for the Wilson polynomials
is then positive and supported on the positive real half-line. In
Section~3 of \cite{Wilson} this result is extended to the case when
$\alpha$ is negative while $\alpha+\beta$, $\alpha+\gamma$ and
$\alpha+\delta$ are positive. The poles at $z=\pm(\alpha+k)$, with
$k\in\mathbb{Z}_{+}$ and $\alpha+k<0$, then give rise to mass points
of the orthogonality measure which are located on the negative real
half-line. This procedure can readily be adapted to our case, with
$\alpha=-1/4+\theta/2$, $\beta=1/4$, $\gamma=1/4+\theta/2$ and
$\delta=3/4$, finally resulting in the orthogonality relation (\ref{eq:OG_rel_discr2}).
\end{remark}

Relying on (\ref{eq:OG_rel_discr2}) one can show, similarly as in
Theorem~\ref{thm:B_multop}, that $H(\theta)$ is unitarily equivalent
to the multiplication operator by the function
\[
h(x)=\frac{\pi}{\cosh(\pi x)}
\]
acting on $L^{2}(\mathcal{M}(\theta),\mbox{d}\mu)$ where $\mathcal{M}(\theta)=(0,+\infty)\cup\{\lambda_{k};\, k=0,1,\ldots,N(\theta)\}$
and $\mbox{d}\mu(x)=\rho(x)\mbox{d}x$ on $(0,+\infty)$. The discrete
part of $\mathcal{M}(\theta)$ occurs if and only if $\theta<1/2$.
The corresponding unitary mapping has an analogous form as that in
(\ref{eq:U}). From this explicit diagonalization one immediately
deduces the full information about the spectral properties of $H(\theta)$
thus reproducing the original result due to Rosenblum as stated in
\cite[Theorem 5]{RosenblumII}. Rosenblum's approach was quite different
than ours though an appropriate symmetry was heavily employed, too.
Namely, it has been shown that $H(\theta)$ is unitarily equivalent
to an integral operator on the positive half-line such that there
exists an explicitly diagonalizable Sturm-Liouville operator in its
commutant.

\begin{theorem} \label{thm:H} For all real $\theta$, $\theta\neq0,-1,-2,\ldots$,
the singular continuous part of the spectrum of Hilbert's matrix $H(\theta)$
is empty and the absolutely continuous part is simple and fills the
interval $[0,\pi]$. For $\theta\geq1/2$, the point spectrum of $H(\theta)$
is empty. For $\theta<1/2$, let $N(\theta)=\lceil-1/2-\theta\rceil$.
Then the only possible eigenvalues of $H(\theta)$ are $\pi/\sin(\pi\theta)$
and $-\pi/\sin(\pi\theta)$ whose multiplicities are respectively
equal to $N(\theta)/2+1$ and $N(\theta)/2$ for $N(\theta)$ even,
and they are both equal to $(N(\theta)+1)/2$ for $N(\theta)$ odd.
\end{theorem}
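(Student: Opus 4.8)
The plan is to read off all the spectral information from the unitary equivalence obtained just above the statement: $H(\theta)$ is unitarily equivalent to the operator $M_{h}$ of multiplication by $h(x)=\pi/\cosh(\pi x)$ on $L^{2}(\mathcal{M}(\theta),\mbox{d}\mu)$, where $\mathcal{M}(\theta)=(0,+\infty)\cup\{\lambda_{k}:0\le k\le N(\theta)\}$, $\lambda_{k}=i(-1/2+\theta+k)$, $N(\theta)=\lceil-1/2-\theta\rceil$, the measure $\mu$ equals $\rho(x)\,\mbox{d}x$ on $(0,+\infty)$ with the weight $\rho$ recalled above, and $\mu$ carries the point masses $\mu(\{\lambda_{k}\})$ recalled above, the discrete part being present exactly when $\theta<1/2$. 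Thus the Lebesgue decomposition of $\mu$ has only an absolutely continuous part, living on $(0,+\infty)$, and a finitely supported pure point part; since moreover $h$ is $C^{\infty}$ with $h'<0$ on $(0,+\infty)$, the subspace $L^{2}\big((0,+\infty),\rho\,\mbox{d}x\big)$ reduces $M_{h}$ and is exactly its absolutely continuous subspace, while its orthogonal complement, the finite-dimensional span of the indicator functions $\chi_{\{\lambda_{k}\}}$, is exactly the pure point subspace. In particular the singular continuous subspace is trivial, so the singular continuous spectrum of $H(\theta)$ is empty.

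For the absolutely continuous part I would note first that $\rho(x)>0$ for every $x>0$, because the factors $x$, $\tanh(\pi x)$, $|\Gamma(-1/2+\theta+ix)|^{2}$ and $\Gamma(\theta)^{-2}$ are all strictly positive there (here one uses $\theta\neq0,-1,-2,\ldots$). Hence $f\mapsto\sqrt{\rho}\,f$ is a unitary map of $L^{2}((0,+\infty),\rho\,\mbox{d}x)$ onto $L^{2}((0,+\infty),\mbox{d}x)$ intertwining $M_{h}$ with itself. Since $h(x)=\pi/\cosh(\pi x)$ is a strictly decreasing $C^{\infty}$ bijection of $(0,+\infty)$ onto $(0,\pi)$, the change of variables $t=h(x)$ turns $M_{h}$ into multiplication by the independent variable on $L^{2}\big((0,\pi),w(t)\,\mbox{d}t\big)$ for a suitable positive integrable weight $w$; this operator has purely absolutely continuous and simple spectrum equal to $[0,\pi]$ (the constant function being a cyclic vector, by density of polynomials). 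This gives the assertion on the absolutely continuous part, which is visibly independent of $\theta$.

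For the point spectrum: when $\theta\ge 1/2$ there are no mass points, $\mu$ is purely absolutely continuous and $\rho>0$, so $\spec_{p}H(\theta)=\varnothing$. When $\theta<1/2$ and $0\le k\le N(\theta)$ one has $k<1/2-\theta$, whence in the explicit formula $\mu(\{\lambda_{k}\})=\Gamma(1-\theta)^{2}(1-2\theta-2k)\big/\big(k!\,\Gamma(2-2\theta-k)\big)$ every factor is strictly positive, so each $\chi_{\{\lambda_{k}\}}$ is an eigenvector of $M_{h}$; since no point of $(0,+\infty)$ carries positive $\mu$-mass, the eigenvalues of $M_{h}$ are exactly the numbers $h(\lambda_{k})$, $0\le k\le N(\theta)$. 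A one-line trigonometric computation gives $\cosh(\pi\lambda_{k})=\cos\!\big(\pi(\theta+k)-\tfrac{\pi}{2}\big)=\sin\big(\pi(\theta+k)\big)=(-1)^{k}\sin(\pi\theta)$, and $\sin(\pi\theta)\ne 0$ since $\theta\notin\mathbb{Z}$, so $h(\lambda_{k})=(-1)^{k}\pi/\sin(\pi\theta)$; in particular the only possible eigenvalues are $\pm\pi/\sin(\pi\theta)$. Because $|\pi/\sin(\pi\theta)|\ge\pi$ whereas $h$ maps $(0,+\infty)$ into the \emph{open} interval $(0,\pi)$, the eigenspace for $\pi/\sin(\pi\theta)$ is spanned by those $\chi_{\{\lambda_{k}\}}$ with $k$ even and that for $-\pi/\sin(\pi\theta)$ by those with $k$ odd; counting even and odd integers in $\{0,\dots,N(\theta)\}$ yields the multiplicities $N(\theta)/2+1$ and $N(\theta)/2$ for $N(\theta)$ even, and $(N(\theta)+1)/2$ for both when $N(\theta)$ is odd. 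Putting the three parts together proves the theorem. Once the multiplication model is available the argument is essentially bookkeeping; the only points demanding a little care are the positivity of $\rho$ and of the masses $\mu(\{\lambda_{k}\})$ throughout the extended range $\theta<1/2$, $-\theta\notin\mathbb{Z}_{+}$, and checking that the parity split of $\{0,\dots,N(\theta)\}$ reproduces precisely the stated multiplicities — noting that when $|\sin(\pi\theta)|=1$ the eigenvalue $\pi/\sin(\pi\theta)$ may equal the endpoint $\pi$ of the absolutely continuous spectrum, which is harmless since $h$ never attains the value $\pi$ on $(0,+\infty)$.
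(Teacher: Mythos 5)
Your proposal is correct and follows the same route the paper intends: Theorem~\ref{thm:H} is stated there without a separate proof, the authors asserting that everything follows ``immediately'' from the unitary equivalence of $H(\theta)$ with multiplication by $h(x)=\pi/\cosh(\pi x)$ on $L^{2}(\mathcal{M}(\theta),\mbox{d}\mu)$, and you simply carry out that bookkeeping (the identity $\cosh(\pi\lambda_{k})=(-1)^{k}\sin(\pi\theta)$ and the count of even and odd indices in $\{0,\dots,N(\theta)\}$ being the only nontrivial steps, both done correctly). One cosmetic slip: after the renormalization $f\mapsto\sqrt{\rho}\,f$ the pushed-forward weight is $w=|(h^{-1})'|$, which behaves like $1/(\pi t)$ as $t\to0^{+}$ and is therefore \emph{not} integrable, so the constant function does not lie in $L^{2}((0,\pi),w\,\mbox{d}t)$; either omit the $\sqrt{\rho}$ step (the weight is then $\rho(h^{-1}(t))\,|(h^{-1})'(t)|$, whose integral equals $\int_{0}^{\infty}\rho\,\mbox{d}x\le1$) or take any a.e.\ nonzero $L^{2}$ function as the cyclic vector --- the conclusion that the spectrum is simple, purely absolutely continuous and equal to $[0,\pi]$ is unaffected.
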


\subsection{The Bergman-Hilbert matrix}

As another application of the general results stated in Theorem~\ref{thm:B_multop}
and Corollaries~\ref{thm:spec_abscont} and \ref{thm:spec_point}
let us consider the so called Bergman-Hilbert matrix $A$ with the
entries
\[
A_{j,k}=\frac{\sqrt{(j+1)(k+1)}}{(j+k+1)^{2}},\ j,k\in\mathbb{Z}_{+},
\]
which has been introduced and studied as an operator on $\ell^{2}(\mathbb{Z}_{+})$
in \cite{Ghatage,DavisGhatage}. It is shown in \cite[Prop.~2]{DavisGhatage}
that the essential spectrum of $A$ equals the interval $[0,1]$.
We can make this analysis more complete by identifying the absolutely
continuous spectrum of $A$.

\begin{proposition} The absolutely continuous spectrum of the Bergman-Hilbert
matrix $A$, regarded as an operator on $\mathbb{\ell}^{2}(\mathbb{Z}_{+})$,
is simple and fills the interval $[0,1]$. \end{proposition}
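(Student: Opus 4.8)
The plan is to realize the Bergman--Hilbert matrix $A$ as a trace class perturbation of a member of the family $B(a,b,c)$ and then to invoke the Kato--Rosenblum theorem on the invariance of the absolutely continuous spectrum.

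First I would pick out the appropriate member. For $j,k$ large one has $A_{j,k}\sim(jk)^{1/2}/(j+k)^{2}$, which matches the leading asymptotics of $B(a,b,c)_{j,k}$ recorded earlier precisely when $b+c-a=2$; a choice meeting all the standing assumptions on $a,b,c$ is $B(1,1,2)$ (the same matrix as $B(1,2,1)$, by the $b\leftrightarrow c$ symmetry of the defining formula), for which one computes directly
\[
B(1,1,2)_{j,k}=\frac{\sqrt{(j+1)(k+1)}}{(j+k+1)(j+k+2)}.
\]
Starting from the elementary identity $(z+1)^{-2}=[(z+1)(z+2)]^{-1}+[(z+1)^{2}(z+2)]^{-1}$, I would then write $A=B(1,1,2)+K$, where $K$ is the weighted Hankel matrix with entries
\[
K_{j,k}=\frac{\sqrt{(j+1)(k+1)}}{(j+k+1)^{2}(j+k+2)},\qquad j,k\in\mathbb{Z}_{+}.
\]
Because $B(1,1,2)$ satisfies (\ref{eq:ineq}) with $b+c-a=2$, Corollary~\ref{thm:spec_abscont} applies and shows that the absolutely continuous spectrum of $B(1,1,2)$ is simple and fills $[0,M(1,1,2)]=[0,\Gamma(1)^{2}/\Gamma(2)]=[0,1]$.

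The core of the argument --- and the step I expect to demand the most care --- is to prove that $K$ is trace class; mere compactness would not suffice, since the absolutely continuous spectrum is in general not preserved under compact perturbations. I would proceed as follows. The matrix $K$ defines a bounded self-adjoint operator, being the difference of the bounded self-adjoint operators $A$ (cf.\ \cite[Prop.~2]{DavisGhatage}) and $B(1,1,2)$. For positivity, I would combine the partial fraction decomposition $[(z+1)^{2}(z+2)]^{-1}=-(z+1)^{-1}+(z+1)^{-2}+(z+2)^{-1}$ with the integral representations $(z+1)^{-1}=\int_{0}^{1}t^{z}\,\mbox{d}t$, $(z+1)^{-2}=\int_{0}^{1}(-\ln t)\,t^{z}\,\mbox{d}t$ and $(z+2)^{-1}=\int_{0}^{1}t^{z+1}\,\mbox{d}t$, obtaining
\[
\frac{1}{(j+k+1)^{2}(j+k+2)}=\int_{0}^{1}(t-1-\ln t)\,t^{j+k}\,\mbox{d}t,
\]
so that, for any finitely supported real sequence $\{\xi_{j}\}$,
\[
\sum_{j,k}K_{j,k}\,\xi_{j}\xi_{k}=\int_{0}^{1}(t-1-\ln t)\left(\sum_{j}\sqrt{j+1}\,\xi_{j}\,t^{j}\right)^{\!2}\mbox{d}t\ \geq\ 0,
\]
since $t-1-\ln t\geq0$ on $(0,1]$; by density $K\geq0$. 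Finally, being positive, $K$ has trace $\mathrm{tr}\,K=\sum_{j}K_{j,j}=\sum_{j}\frac{1}{2(2j+1)^{2}}=\frac{\pi^{2}}{16}<\infty$, and hence $K$ is trace class.

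With this in place the conclusion is immediate: by the Kato--Rosenblum theorem the absolutely continuous parts of $A=B(1,1,2)+K$ and of $B(1,1,2)$ are unitarily equivalent, so they have the same absolutely continuous spectrum and the same multiplicity function. Together with the previous paragraph this yields that the absolutely continuous spectrum of $A$ is simple and equals $[0,1]$, as asserted.
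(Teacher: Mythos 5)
Your proposal is correct, and its overall architecture coincides with the paper's: the same splitting $A=B(1,1,2)+K$ with $K_{j,k}=\sqrt{(j+1)(k+1)}\,/\,\bigl((j+k+1)^{2}(j+k+2)\bigr)$, the same appeal to Corollary~\ref{thm:spec_abscont} for $B(1,1,2)$, and the same invocation of the invariance of the absolutely continuous part under trace class perturbations. The one place where you genuinely diverge is the verification that $K$ is trace class. The paper expands $1/(j+k+1)^{2}$ into a geometric-type series and writes the perturbation as a sum $\sum_{s}(s+1)J_{s}H(1)J_{s}$ of operators whose positivity is inherited from that of the Hilbert matrix $H(1)$ (hence from Theorem~\ref{thm:H}), and then sums the trace norms term by term. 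You instead prove $K\geq0$ in one stroke from the integral representation
\[
\frac{1}{(z+1)^{2}(z+2)}=\int_{0}^{1}(t-1-\ln t)\,t^{z}\,\mbox{d}t,\qquad t-1-\ln t\geq0\ \text{on }(0,1],
\]
which exhibits the quadratic form of $K$ as an integral of squares, and then conclude by summing the diagonal, $\sum_{j}K_{j,j}=\sum_{j}\tfrac{1}{2}(2j+1)^{-2}=\pi^{2}/16$. Your route is self-contained (it does not need the positivity of $H(1)$ as an input) and is arguably cleaner: the paper's series expansion actually resums to a Hankel symbol $1/(j+k+1)^{3}$ rather than $1/\bigl((j+k+1)^{2}(j+k+2)\bigr)$, so as written it only dominates $Z$ rather than reproducing it exactly, whereas your integral representation is an identity. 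Both methods rest on the same standard fact that a positive bounded operator with summable diagonal is trace class, and both correctly note that mere compactness would not suffice.
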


\begin{proof} Referring to (\ref{eq:Babc}), let $B=B(1,1,2)$. Then
\[
B_{j,k}=\frac{\sqrt{(j+1)(k+1)}}{(j+k+1)(j+k+2)},\ j,k\in\mathbb{Z}_{+},
\]
From Corollary~\ref{thm:spec_abscont} we know that the absolutely
continuous spectrum of $B$ is simple filling the interval $[0,1]$. 

Let us show that $Z:=A-B$ is a trace class operator. We have
\[
Z_{j,k}=\frac{\sqrt{(j+1)(k+1)}}{(j+k+1)^{2}(j+k+2)}.
\]
Expanding
\[
\frac{1}{(j+k+1)^{2}}=\sum_{s=0}^{\infty}\frac{(s+1)\, j^{s}k^{s}}{(j+1)^{s+2}(k+1)^{s+2}}
\]
we can write
\[
Z=\sum_{s=0}^{\infty}T_{s},\ \ \text{where}\ \ T_{s}=(s+1)\, J_{s}H(1)J_{s},\ J_{s}=\diag\!\left\{ \frac{j^{s}}{(j+1)^{s+3/2}};\, j\in\mathbb{Z}_{+}\right\} \!,
\]
where we have used the notation (\ref{eq:H}). From Theorem~\ref{thm:H}
we know that $H(1)$ is positive and so is $T_{s}$. Consequently
($\|\cdot\|_{1}$ standing for the trace norm),
\[
\sum_{s=0}^{\infty}\|T_{s}\|_{1}=\sum_{s=0}^{\infty}\,\sum_{j=0}^{\infty}(T_{s})_{j,j}=\frac{1}{2}\sum_{j=0}^{\infty}\,\sum_{s=0}^{\infty}\frac{(s+1)\, j^{2s}}{(j+1)^{2s+4}}=\frac{\pi^{2}}{16}\,.
\]
The space of trace class operators is complete and therefore $Z$
is, too, a trace class operator.

To conclude the proof we recall that the absolutely continuous spectrum
is known to be invariant with respect to trace class perturbations.
\end{proof}

\section*{Acknowledgments}

One of the authors (P.\v{S}.) wishes to acknowledge gratefully partial
support from grant No. GA13-11058S of the Czech Science Foundation.

\end{document}